\numberwithin{equation}{section}
\newtheorem{thm}{Theorem}[section]
\newtheorem{lemma}{Lemma}[section]
\newtheorem{cor}{Corollary}[section]
\newtheorem{defi}{Definition}[section]
\begin{document}
\title[Notes on the functional LYZ ellipsoid] {Notes on the functional LYZ ellipsoid}

\author{Niufa Fang}

\address{\parbox[l]{1\textwidth}{Chern Institute of Mathematics, Nankai University, Tianjin 300071, China\\
School of Mathematics and
Statistics, Southwest University, Chongqing 400715, China }}
\email{fangniufa@nankai.edu.cn}

\author{Jiazu Zhou*}
\address{\parbox[l]{1\textwidth}{School of Mathematics and
Statistics, Southwest University, Chongqing 400715, China\\
College of Science, Wuhan University of Science and Technology, Wuhan, Hubei 430081, China}}
\email{zhoujz@swu.edu.cn}

%\date{2019/10/14}
\subjclass[2000]{52A20, 52A30} \keywords{Log-concave functions;
affine Sobolev inequality, $L_p$ John ellipsoids, functional LYZ
ellipsoid of log-concave functions.}

\thanks{The first author is supported in part by China Postdoctoral Science  Foundation (No.2019M651001). }
\thanks{*The corresponding author is supported in part by NSFC (No. 11671325)
}

\maketitle

\begin{abstract}
We define the functional LYZ ellipsoid of log-concave functions.
Then we give notes appended to \cite{fangzhou}.
\end{abstract}

\vskip 1cm

\section{Introduction and Preliminaries}

Ellipsoids are important objections in convex geometry  that
characterize  equality conditions of many known affine isoperimetric
inequalities, such as the Petty projection inequality, the
Busemann-Petty centroid inequality and its polar, the
Blaschke-Santal\'o inequality.

The well-known \emph{John ellipsoid}, an ellipsoid of  maximal
volume contained in a related   convex body, is a very important
geometric object in convex geometry (cf. \cite{ball},
\cite{pisier}). Two important facts  concerning the John ellipsoid
are John's inclusion inequality and Ball's volume-ratio inequality
\cite{ball}. In 2000, Lutwak, Yang and Zhang introduced a new
ellipsoid,  the \emph{LYZ ellipsoid} \cite{LYZ2000}. Lutwak, Yang
and Zhang \cite{LYZ2005} developed the $L_p$ John ellipsoid.
Although $L_p$ John ellipsoids may not be contained in the original
convex body, $L_p$ John ellipsoids  satisfy Ball's volume ratio
inequality.  The $L_p$ John ellipsoid provides a unified treatment
for several fundamental objects in convex geometry.  If the John
point, the center of John ellipsoid,   is at the origin, then
$L_{\infty}K$ John ellipsoid  is precisely the classical John
ellipsoid and the $L_1$ John ellipsoid and the $L_2$ John ellipsoid
are, respectively, the Petty and  LYZ ellipsoids.
  Orlicz John ellipsoids \cite{zou}
  are generalizations of  the classical John ellipsoid and the
$L_p$ John ellipsoids. Recently the John ellipsoid of log-concave
functions was defined by Alonso, Merino, Jim\'enez and Villa
\cite{Alonso}, the LYZ ellipsoid for log-concave functions was
defined in \cite{fangzhou} and  Loewner (ellipsoid) function for a
log-concave function was introduced by Li, Schuett and
Werner\cite{LSW}

Let $\mathcal{K}_o^n$ denotes the set of convex bodies in
$\mathbb{R}^n$ containing the origin in their interior.
 The radial  function, $\rho_K$,  of a convex body $K$ is
defined as $\rho_K(x)=\max\{t\geq 0:  t x\in K\}$, $x\in
\mathbb{R}^n\setminus\{0\}$.  The   LYZ ellipsoid was  defined as
\cite{LYZ2000}
\begin{eqnarray}\label{LYZ's definition}
\rho_{\Gamma_{-2}^{}K}^{2}(u)=\frac{1}{V(K)}\int_{S^{n-1}}|\langle u, v\rangle|^2dS_2(K,v),
\end{eqnarray}
where $K\in \mathcal{K}_o^n$  with volume $V(K)$, and $S_{2}(K,\cdot)$ denotes the $L_2$ surface area measure of $K$.

To generalize  formula (\ref{LYZ's definition}) to the case of
log-concave functions, the corresponding $L_2$ surface area measure
of a log-concave function should be defined. The surface area
measure $\mu_f$ ($=\nabla (-\log f)_{\sharp} (f \mathcal{H}^{n})$,
where $\mathcal{H}^{n}$ is the $n$-dimensional Hausdorff measure) of
log-concave function $f$, introduced by Colesanti and
Fragal$\grave{\text{a}}$ \cite{Colesanti}, implies  $L_2$ surface
area measure of convex bodies.  It is not  the $L_2$ surface area
measure of log-concave function $f$. The main reason is  the
corresponding $L_2$-Minkowski addition of log-concave functions.

Recall the $L_2$ surface area measure and the classical surface area
measure
\begin{eqnarray*}\label{}
\frac{dS_{2}(K,\cdot)}{dS_{}(K,\cdot)}=h_K^{-1}(\cdot).
\end{eqnarray*}
The LYZ ellipsoid of log-concave functions can be defined as:

\begin{defi}\label{new definition}
Let $f$ be an integrable log-concave function in $\mathbb{R}^n$. Let
$h_f(x)=\sup_{y\in \mathbb{R}^n}\{\langle x,y\rangle+\log f(y)\}$
denote the support function of $f$ and
$J(f)=\int_{\mathbb{R}^n}fdx$. The log-concave function
$\Gamma_{-2}f: \mathbb{R}^n\rightarrow [0,\infty)$
\begin{eqnarray}\label{definition}
-\log \Gamma_{-2}f(x)=\frac{n}{4 J(f)}\int_{\mathbb{R}^n}|\langle x, y\rangle|^2h_f(y)^{-1}d\mu_f(y),
\end{eqnarray}
is called the functional LYZ ellipsoid of $f$.
\end{defi}

The right hand side of (\ref{definition}) is a continuous convex
function, and  the constant $\frac{n}{4J(f)}$ in (\ref{definition})
is chosen such that $\Gamma_{-2}^{}\gamma(x)=\gamma(x) \ \
(=e^{-\frac{\|x\|^2}{2}})$, the Gaussian function.  The
$\Gamma_{-2}f$ includes the LYZ ellipsoid (Lemma \ref{include
geometric case}).

Let $\varphi: \mathbb{R}^n\rightarrow \mathbb{R}\cup \{+\infty\}$.   If for every $x,y\in\mathbb{R}^n$ and $\lambda\in[0,1]$,
$$
\varphi((1-\lambda)x+\lambda y)\leq (1-\lambda)\varphi(x)+\lambda \varphi(y),
$$
then $\varphi$ is a convex function. Let
$$
\text{dom}(\varphi)=\{x\in \mathbb{R}^n: \varphi(x)\in \mathbb{R}\}.
$$
By the convexity of $\varphi$, $\text{dom}(\varphi)$ is a convex set. We say that $\varphi$ is \emph{proper} if $\text{dom}(\varphi)\neq \emptyset$.
The \emph{Legendre conjugate} of $\varphi$ is the convex function defined by
\begin{eqnarray}\label{Fenchel conjugate}
\varphi^*(y)=\sup_{x\in\mathbb{R}^n}\left\{\langle x,y\rangle-\varphi(x)\right\}\quad\quad\forall y\in\mathbb{R}^n.
\end{eqnarray}
Clearly, $\varphi(x)+\varphi^*(y)\geq \langle x,y\rangle$ for all $x, y\in\mathbb{R}^n $, there is an equality if and only if  $x\in\text{dom}(\varphi)$ and $y$ is in the subdifferential of $\varphi$ at $x$. Hence,
 $$
 \varphi^*(\nabla \varphi(x))+\varphi(x)= \langle x,\nabla \varphi(x)\rangle.
 $$

The convex function $\varphi:\mathbb{R}^n\rightarrow \mathbb{R}\cup \{+\infty\}$ is  \emph{lower semi-continuous},
if the subset $\{x\in \mathbb{R}^n: \varphi(x)>t\}$ is an open set for any $t\in(-\infty,+\infty]$.
If $\varphi$ is a lower semi-continuous  convex function, then also $\varphi^*$ is a lower semi-continuous  convex function, and $\varphi^{**}=\varphi$.

On the class of convex functions from $\mathbb{R}^n$ to $\mathbb{R}\cup \{+\infty\}$, we consider the operation of \emph{infimal convolution},
defined by

\begin{eqnarray}\label{infimal convolution}
\varphi\Box \psi(x)=\inf_{y\in\mathbb{R}^n}\{\varphi(x-y)+\psi(y)\} \quad \forall x\in\mathbb{R}^n,
\end{eqnarray}
and the following \emph{right scalar multiplication} by a nonnegative real number $\alpha$:
\begin{eqnarray}\label{right scalar multiplication}
(\varphi\alpha)(x)=\alpha \varphi\left(\frac{x}{\alpha}\right), \quad\text{for}\quad \alpha>0.
\end{eqnarray}

In this paper, we consider   log-concave functions in $\mathbb{R}^n$:
\begin{eqnarray*}
f:\mathbb{R}^n\rightarrow \mathbb{R},\quad f=e^{-\varphi},
\end{eqnarray*}
where $\varphi:\mathbb{R}^n\rightarrow \mathbb{R}\cup \{+\infty\}$ is convex.

A log-concave function is degenerate if it  vanishes almost everywhere in $\mathbb{R}^n$.  A non-degenerate, log-concave  function $e^{-\varphi}$ is integrable on $\mathbb{R}^n$ if and only if \cite{Cordero-ErausquinKlartag}
$$
\lim_{\|x\|\rightarrow+\infty}\varphi(x)=+\infty.
$$
Any log-concave function is differentiable almost everywhere in $\mathbb{R}^n$.

Let
\begin{eqnarray}\label{}
\mathcal{L}&=&\left\{\varphi:\mathbb{R}^n\rightarrow \mathbb{R}\cup \{+\infty\} \Big| \quad
\varphi \text{ proper, convex, }\lim_{\|x\|\rightarrow+\infty}\varphi(x)=+\infty\right\},\\
\mathcal{A}&=&\left\{f:\mathbb{R}^n\rightarrow \mathbb{R} \Big| \quad  f=e^{-\varphi},\varphi\in \mathcal{L}\right\}.
\end{eqnarray}
The \emph{total mass functional} of $f$ is defined as
\begin{eqnarray}
J(f)=\int_{\mathbb{R}^n}f(x)dx.
\end{eqnarray}

There are important connections between convex bodies and log-concave functions.
The Gaussian function
$$\gamma(x)=e^{-\frac{\|x\|^2}{2}}$$
plays the role in analysis of log-concave functions as the ball does in geometry of convex bodies, and
$$J(\gamma)=(2\pi)^{\frac{n}{2}}=c_n.$$

  If $f\in \mathcal{A}$, the polar function, $f^{\circ}$, of $f$ is defined as
\begin{eqnarray}\label{dual function}
f^{\circ}=e^{-\varphi^*}.
\end{eqnarray}
The \emph{support function} of log-concave function $f=e^{-\varphi}$
is  defined in \cite{Rotem1}.
%by Rotem \cite{Rotem1},
\begin{eqnarray*}\label{}
h_{f}(x)=\varphi^*(x).
\end{eqnarray*}
%This is a proper generalization, in the sense that $h_{\chi_K}=h_K$.

Let $f=e^{-\varphi},g=e^{-\psi}$ and  $\alpha,\beta>0$. The ``Minkowski addition", $\alpha\cdot f\oplus\beta\cdot g$, of $f$ and $g$ is defined by
\begin{eqnarray}\label{addtion on log-concave function}
\alpha\cdot f\oplus\beta\cdot g=e^{-[(\varphi\alpha)\square(\psi\beta)]}.
\end{eqnarray}
The support function of $\alpha\cdot f\oplus\beta\cdot g$ satisfies
\begin{eqnarray}\label{definition of the support function of sum of log-concave functions}
h_{\alpha\cdot f\oplus\beta\cdot g}(x)=\alpha h_f(x)+\beta h_g(x).
\end{eqnarray}
In particular,
\begin{eqnarray}
h_{\alpha\cdot f}(x)=\alpha h_f(x).
\end{eqnarray}

Let $f,g\in \mathcal{A}$. Whenever the following limit exists
\begin{eqnarray}\label{definition of the first variation}
\lim_{t\rightarrow 0^{+}}\frac{J(f\oplus t\cdot g)-J(f)}{t},
\end{eqnarray}
denote it by $\delta J(f,g)$, \emph{the first variation} of $J$ at $f$ along $g$.
For $f,g\in \mathcal{A}$,
 set
\begin{eqnarray*}
\overline{ \delta J}(f,g)=\frac{ \delta J(f,g)}{J(f)}.
\end{eqnarray*}

\section{Petty projection inequality for log-concave functions}

\begin{lemma} \label{equivalent of two problem} Suppose $f\in\mathcal{A}$.  \\
(1) If $\gamma_T$ is a Gaussian function that is an $\overline{S_{\log}}$ solution for $f$, then
\begin{eqnarray*}
\frac{J(f)}{\delta J(f,\gamma_T)}\cdot\gamma_T
\end{eqnarray*}
is an $S_{\log}$ solution for $f$.
\\ (2) If $\gamma_{T}$ is a  Gaussian function that is an $S_{\log}$ solution for $f$, then
\begin{eqnarray*}
\left(\frac{c_n}{J(\gamma_{T})}\right)^{\frac{2}{n}}\cdot\gamma_{T}
\end{eqnarray*}
is an $\overline{S_{\log}}$ solution for $f$.
\end{lemma}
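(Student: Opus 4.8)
The plan is to reduce both assertions to the homogeneity of the two functionals $J$ and $\delta J(f,\cdot)$ under the right scalar multiplication $\alpha\cdot(\,\cdot\,)$, and to the observation that the $S_{\log}$ and $\overline{S_{\log}}$ problems are two different normalisations of one and the same scale-free extremal problem over Gaussians. First I would record the scaling identities that drive everything. Writing a Gaussian as $\gamma_T=e^{-\varphi_T}$, the definition $(\varphi_T\alpha)(x)=\alpha\varphi_T(x/\alpha)$ together with the substitution $x\mapsto\sqrt{\alpha}\,x$ gives, for all $\alpha>0$,
\begin{eqnarray*}
J(\alpha\cdot\gamma_T)=\alpha^{\frac{n}{2}}J(\gamma_T).
\end{eqnarray*}
This degree-$\tfrac{n}{2}$ homogeneity is a feature of the Gaussian (quadratic) class over which the two problems are posed, which is precisely why the lemma is phrased for Gaussian solutions. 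For the first variation I would use that right scalar multiplication is associative along the additive parameter, $t\cdot(\alpha\cdot g)=(t\alpha)\cdot g$, which follows at once from $\bigl((\psi\alpha)t\bigr)(x)=t\alpha\,\psi\!\bigl(x/(t\alpha)\bigr)=(\psi(t\alpha))(x)$ and holds for every $g$. Substituting $s=t\alpha$ in the difference quotient defining $\delta J$ then yields
\begin{eqnarray*}
\delta J(f,\alpha\cdot g)=\alpha\,\delta J(f,g),\qquad \overline{\delta J}(f,\alpha\cdot g)=\alpha\,\overline{\delta J}(f,g).
\end{eqnarray*}

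The consequence I would exploit is that the ratio $R(g)=\delta J(f,g)^{n/2}/J(g)$ is invariant under $g\mapsto\alpha\cdot g$, so optimising over the \emph{shapes} $\gamma_T$ is scale-free, and the $S_{\log}$ and $\overline{S_{\log}}$ problems (both amounting to minimising $R$) differ only in how the optimal shape is scaled. Recall that $S_{\log}$ maximises $J(g)$ under $\overline{\delta J}(f,g)\le 1$, i.e. $\delta J(f,g)\le J(f)$, while $\overline{S_{\log}}$ minimises $\overline{\delta J}(f,g)$ under the normalisation $J(g)=c_n$; this is exactly what the two prefactors in the statement achieve. By the degree-$1$ homogeneity,
\begin{eqnarray*}
\delta J\!\left(f,\tfrac{J(f)}{\delta J(f,\gamma_T)}\cdot\gamma_T\right)=J(f),
\end{eqnarray*}
so the rescaled function in (1) saturates the $S_{\log}$ constraint, and by the degree-$\tfrac{n}{2}$ homogeneity,
\begin{eqnarray*}
J\!\left(\Bigl(\tfrac{c_n}{J(\gamma_T)}\Bigr)^{\frac{2}{n}}\!\cdot\gamma_T\right)=\frac{c_n}{J(\gamma_T)}\,J(\gamma_T)=c_n,
\end{eqnarray*}
so the rescaled function in (2) meets the $\overline{S_{\log}}$ normalisation.

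With these identities in hand, both parts follow by the same mechanism. For (1) I would take an arbitrary competitor $g$ admissible for $S_{\log}$ and, using the scale invariance of $R$, slide it along its ray $\{\alpha\cdot g\}$ until its mass equals $c_n$; optimality of $\gamma_T$ for the (scale-free) $\overline{S_{\log}}$ problem then bounds $R(g)$ below by $R(\gamma_T)$, and since the maximal value of $J$ over the $S_{\log}$-ray equals $J(f)^{n/2}/R(\gamma_T)$, this forces $\tfrac{J(f)}{\delta J(f,\gamma_T)}\cdot\gamma_T$ to be $S_{\log}$-optimal. For (2) I would run the argument in reverse: starting from an $S_{\log}$-optimal $\gamma_T$, the rescaling $\bigl(c_n/J(\gamma_T)\bigr)^{2/n}\cdot\gamma_T$ keeps $R$ fixed and pins the mass at $c_n$, and $S_{\log}$-optimality (minimality of $R$) translates into minimality of $\overline{\delta J}$ under the normalised constraint. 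Note that the two prefactors are essentially reciprocal rescalings, which is the structural reason the two passages are mutually inverse.

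The step I expect to be the genuine obstacle is the bookkeeping needed to verify that a competitor in one problem, once slid along its scaling ray, lands exactly on the constraint surface of the other, so that the two optimal values are compared at matching normalisations; once the homogeneity degrees $\tfrac{n}{2}$ and $1$ are pinned down this is routine, but it must be carried out with the correct inequality orientation in the constraints. A secondary point to check is that the extremal value is attained along a single ray of Gaussians, so that scaling never leaves the admissible class; this is guaranteed by the strict monotonicity in $\alpha$ of both $J(\alpha\cdot\gamma_T)$ and $\delta J(f,\alpha\cdot\gamma_T)$.
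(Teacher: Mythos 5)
Your proposal is correct and follows essentially the same route as the paper: both proofs rest on the homogeneity identities $J(\alpha\cdot\gamma_T)=\alpha^{n/2}J(\gamma_T)$ and $\delta J(f,\alpha\cdot g)=\alpha\,\delta J(f,g)$, verify that the two prefactors land the rescaled Gaussian exactly on the other problem's constraint surface, and then transfer optimality by rescaling competitors. The only difference is packaging: the paper runs the competitor comparison through the characterization of solutions in Lemma 4.1 of the reference \cite{fangzhou}, whereas you make the common scale-free structure explicit via the invariant ratio $R(g)=\delta J(f,g)^{n/2}/J(g)$ --- the same computation in slightly different bookkeeping.
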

\begin{proof}
(1) Since $\gamma_T$ is a  Gaussian function that is an $\overline{S_{\log}}$ solution for $f$, Lemma 4.1 (2) in \cite{fangzhou} tells  that $J(\gamma_T)=c_n$, and for any $P\in {\rm GL}(n)$ with $J(\gamma_ {P})= c_n$,
\begin{eqnarray}\label{equivalent of two problem1}
\overline{ \delta J}(f,\gamma_T)\leq \overline{ \delta J}(f,\gamma_ P).
\end{eqnarray}

By directly computing we have
\begin{eqnarray*}
\overline{ \delta J}\left(f,\frac{J(f)}{\delta J(f,\gamma_T)}\cdot\gamma_T\right)=1.
\end{eqnarray*}
From (\ref{equivalent of two problem1}), and $J(\gamma_ {P})=J(\gamma_ {T})= c_n$, we have
\begin{eqnarray*}
J\left(\frac{J(f)}{\delta J(f,\gamma_P)}\cdot\gamma_P\right)\leq J\left(\frac{J(f)}{\delta J(f,\gamma_T)}\cdot\gamma_T\right).
\end{eqnarray*}
Lemma 4.1
(1) in \cite{fangzhou} guarantees  that the Gaussian function $
\frac{J(f)}{\delta J(f,\gamma_T)}\cdot\gamma_T
$ is an $S_{\log}$ solution for $f$.

(2) Let $\gamma_{T}$  be an $S_{\log}$ solution for $f$.  Then Lemma 4.1 (1) in \cite{fangzhou} tells  that
\begin{eqnarray*}
\overline{ \delta J}(f,\gamma_{T})=1,
\end{eqnarray*}
and for any $P\in {\rm GL}(n)$ with $ \overline{\delta J}(f,\gamma_ {P})=1$,
\begin{eqnarray}\label{equivalent of two problem2}
\frac{J(\gamma_ {T})}{c_n}\geq \frac{J(\gamma_ {P})}{c_n}.
\end{eqnarray}
By (\ref{equivalent of two problem2}), $ \overline{\delta J}(f,\gamma_ {P})= \overline{\delta J}(f,\gamma_ {T})=1$, we obtain
\begin{eqnarray*}
\overline{ \delta J}\left(f,\left(\frac{c_n}{J(\gamma_{P})}\right)^{\frac{2}{n}}\cdot\gamma_{P}\right)
&=&\left(\frac{c_n}{J(\gamma_{P})}\right)^{\frac{2}{n}}\overline{ \delta J}(f,\gamma_{P})\\
&\geq& \left(\frac{c_n}{J(\gamma_{T})}\right)^{\frac{2}{n}}\overline{ \delta J}(f,\gamma_{T})\\
&=&\overline{ \delta J}\left(f,\left(\frac{c_n}{J(\gamma_{T})}\right)^{\frac{2}{n}}\cdot\gamma_{T}\right).
\end{eqnarray*}
  Since
\begin{eqnarray*}
J\left(\left(\frac{c_n}{J(\gamma_{T})}\right)^{\frac{2}{n}}\cdot\gamma_{T}\right)=c_n,
\end{eqnarray*}
Lemma 4.1 (2) in\cite{fangzhou} ensures  that the Gaussian function $\left(\frac{c_n}{J(\gamma_{T})}\right)^{\frac{2}{n}}\cdot\gamma_{T}$
 is an $\overline{S_{\log}}$ solution for $f$.
\end{proof}

\begin{lemma}\label{existence}
There exists a  solution to Problem $S_{\log}$.
\end{lemma}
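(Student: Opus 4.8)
The plan is to prove existence for the normalized problem $\overline{S_{\log}}$ and then to transfer the solution to $S_{\log}$ by Lemma \ref{equivalent of two problem}. Indeed, part (1) of that lemma shows that any Gaussian $\gamma_T$ solving $\overline{S_{\log}}$ for $f$ yields, after the explicit rescaling $\tfrac{J(f)}{\delta J(f,\gamma_T)}\cdot\gamma_T$, a solution of $S_{\log}$ for $f$. Hence it suffices to produce a Gaussian $\gamma_T$ with $J(\gamma_T)=c_n$ that minimizes $\overline{\delta J}(f,\cdot)$ among all Gaussians of mass $c_n$.

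First I would make the variational problem concrete. Writing a Gaussian as $\gamma_T(x)=\gamma(Tx)=e^{-\frac12\|Tx\|^2}$ for $T\in\mathrm{GL}(n)$, one computes $J(\gamma_T)=c_n/|\det T|$, so the mass constraint $J(\gamma_T)=c_n$ is exactly $|\det T|=1$; moreover $\gamma_T$ depends only on $T^{\mathsf T}T$, so we may take $T$ symmetric positive definite. Since $h_{\gamma_T}(y)=\tfrac12\langle Qy,y\rangle$ with $Q=(T^{\mathsf T}T)^{-1}$, the integral representation of $\delta J(f,g)$ as an integral of the support function $h_g$ against the surface area measure $\mu_f$ shows that $\overline{\delta J}(f,\gamma_T)$ is a positive constant multiple of
\[
\mathrm{tr}(QM),\qquad M=\int_{\mathbb{R}^n}y\otimes y\,d\mu_f(y),
\]
and the problem reduces to minimizing $\mathrm{tr}(QM)$ over $\{Q\succ0:\det Q=1\}$, the constraint $\det Q=1$ being equivalent to $J(\gamma_T)=c_n$.

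Next I would argue existence by compactness. The objective $Q\mapsto\mathrm{tr}(QM)$ is continuous (indeed linear), so I would take a minimizing sequence $Q_k\succ0$ with $\det Q_k=1$ and seek a convergent subsequence. The decisive point is coercivity: if $v_k$ denotes a unit eigenvector of $Q_k$ for its largest eigenvalue $\lambda_{\max}(Q_k)$, then
\[
\mathrm{tr}(Q_kM)\ge\lambda_{\max}(Q_k)\,\langle Mv_k,v_k\rangle\ge\lambda_{\max}(Q_k)\,\lambda_{\min}(M),
\]
so provided $M\succ0$ the objective tends to $+\infty$ whenever $\lambda_{\max}(Q_k)\to\infty$ (equivalently, since $\det Q_k=1$, whenever $Q_k$ leaves every compact subset of $\{Q\succ0:\det Q=1\}$). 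This confines the minimizing sequence to a compact set, and continuity then produces a limit $Q_\infty\succ0$ with $\det Q_\infty=1$ attaining the infimum; the associated $T$ gives the sought $\overline{S_{\log}}$ solution, and Lemma \ref{equivalent of two problem} (1) finishes the proof.

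The hard part will be justifying coercivity, which rests on two properties of the surface area measure $\mu_f$ of $f\in\mathcal{A}$: that the second moment matrix $M$ is \emph{finite}, and that it is \emph{positive definite}, i.e. $\mu_f$ is not supported in any hyperplane through the origin. Finiteness comes from the integrability of $f$ (which, as recorded above, forces $\varphi(x)\to+\infty$ and controls the tails of $\mu_f$), while positive definiteness is the genuine non-degeneracy of $f$; isolating and proving these two facts cleanly is where the real effort lies, the remaining steps being the elementary spectral estimate above together with a routine extraction of a convergent subsequence.
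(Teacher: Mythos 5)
Your proposal is correct in substance but follows a genuinely different route from the paper. The paper works directly on $S_{\log}$: it takes a maximizing sequence of Gaussians, uses the constraint $\delta J(f,\gamma_T)\le J(f)$ together with the estimate $h_E(x)\ge \tfrac{1}{2}\mathrm{diam}(E)\,|\langle v_E,x\rangle|$ to get a uniform bound on the conjugates $\left(\tfrac{\|Tx\|^2}{2}\right)^*$ depending only on $f$, and then invokes infinite-dimensional machinery — Rockafellar's selection theorem for convex functions (Theorem 10.9 in \cite{R}), the bicontinuity of the Legendre transform under epi-convergence (Theorem 11.34 in \cite{RW}), and dominated convergence — to extract a limit. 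You instead reduce to $\overline{S_{\log}}$ via Lemma \ref{equivalent of two problem} (1) and then observe that, since $h_{\gamma_T}(y)=\tfrac12\langle Qy,y\rangle$ with $Q=(T^{t}T)^{-1}$ and $\delta J(f,\gamma_T)=\int h_{\gamma_T}\,d\mu_f=\tfrac12\mathrm{tr}(QM)$, the whole problem is the finite-dimensional minimization of the linear functional $\mathrm{tr}(QM)$ over $\{Q\succ 0:\det Q=1\}$, settled by the spectral coercivity bound $\mathrm{tr}(QM)\ge\lambda_{\max}(Q)\lambda_{\min}(M)$ and compactness. This buys several things the paper's argument does not: it avoids having to check that an epi-limit of Gaussians is again a Gaussian (automatic in the matrix setting), and it puts the explicit solution within reach — by the arithmetic-geometric mean inequality $\mathrm{tr}(QM)\ge n(\det M)^{1/n}$ with equality iff $Q=(\det M)^{1/n}M^{-1}$, which would also give the uniqueness claimed in Theorem \ref{main theorem2} for free. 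The two facts you flag as the ``real effort'' — finiteness of $M=\int y\otimes y\,d\mu_f(y)$ and its positive definiteness (i.e.\ $\mu_f$ not concentrated on a hyperplane) — are not defects relative to the paper: its own proof uses exactly the same facts implicitly, since the diameter bound there is vacuous unless $\min_{v\in S^{n-1}}\int|\langle v,x\rangle|^2d\mu_f(x)>0$, and the constraint $\delta J(f,\gamma_T)\le J(f)$ is empty unless the second moments of $\mu_f$ are finite; both are established for suitable $f$ in \cite{fangzhou} and \cite{Colesanti}, to which either argument should appeal.
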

\begin{proof}
 Given a  function $\gamma_T=e^{-\frac{\|Tx\|^2}{2}}$ (where $T\in \text{GL}(n)$ is a positive definite symmetric matric),
 we can write it as $\gamma_T=e^{-\frac{\|x\|_{E^{\circ}}^2}{2}}$,  where $E=T^{t}B$ is an origin-centered ellipsoid.
 There exists a $v_E\in S^{n-1}$ such that the diameter of $E$ satisfies $\text{diam }(E) \frac{|\langle v_E,x\rangle|}{2}\leq \|x\|_{E^{\circ}}$.
 Then, we have
 \begin{eqnarray*}
\frac{ \text{diam }(E^{\circ})^2}{8} \min_{v\in S^{n-1}}\int_{\mathbb{R}^n}|\langle v,x\rangle|^2d\mu_f(x)&\leq&\int_{\mathbb{R}^n}\frac{\|x\|_E^2}{2}d\mu_f(x)\\
 &=&\delta J(f,\gamma_ T)\\
 &\leq& J(f).
\end{eqnarray*}
Since $\|x\|_E<\text{diam }(E^{\circ})\|x\|$, therefore the upper
bound of  the convex function $\left(\frac{\|Tx\|^2}{2}\right)^*$ is
depended only on $f$. According to  Theorem  10.9 in \cite{R}, there
exist a convex function $\tilde{\varphi}$ such that the Legendre
conjugate of a maximizing sequence of  Gaussian functions for
Problem $S_{\log}$ converge to $\tilde{\varphi}$. By  Lemma 6 in
\cite{ColesantiLudwigMussnig1} or Theorem 11.34 in \cite{RW}, a
maximizing sequence of  Gaussian functions for Problem $S_{\log}$
converge to $\tilde{\varphi}^*$. By the dominated convergence
theorem, there exists a solution to Problem $S_{\log}$.
 \end{proof}

 Theorem 4.2 in \cite{fangzhou} can be rewritten  in a simple form.

\begin{thm}\label{main theorem2}
Let $f\in\mathcal{A}'$. Then problem $S_{\log}$ has a unique solution. Moreover, a Gaussian function $\gamma_T$ solves $S_{\log}$ if and only if it satisfies
\begin{eqnarray}\label{main theorem2-1}
h_{\gamma_T^{\circ}}(y)=\frac{n}{4J(f)}\int_{\mathbb{R}^n}|\langle x, y\rangle|^2d\mu_f(x),
\end{eqnarray}
for all $ y\in \mathbb{R}^n$.
\end{thm}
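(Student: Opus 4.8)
The plan is to reduce Problem $S_{\log}$ to a finite-dimensional optimization over positive definite symmetric matrices and to read off its optimality condition as (\ref{main theorem2-1}). I would parametrize a Gaussian candidate by its matrix, writing $\gamma_P=e^{-\frac12\langle Qx,x\rangle}$ with $Q=P^tP$ positive definite symmetric (every such $Q$ occurs as $P$ ranges over ${\rm GL}(n)$, and the eventual solution is $\gamma_T$ with $T=Q^{1/2}$). Then I record two explicit formulas. First, $J(\gamma_P)=c_n(\det Q)^{-1/2}$. Second, using the first variation formula $\delta J(f,\gamma_P)=\int_{\mathbb{R}^n}h_{\gamma_P}\,d\mu_f$ already invoked in the proof of Lemma \ref{existence}, together with $h_{\gamma_P}(x)=\frac12\langle Q^{-1}x,x\rangle$, one gets $\delta J(f,\gamma_P)=\frac12\mathrm{tr}(Q^{-1}M)$, where $M=\int_{\mathbb{R}^n}x\otimes x\,d\mu_f(x)$ is the second moment matrix of $\mu_f$.

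By Lemma \ref{equivalent of two problem} and Lemma 4.1(1) of \cite{fangzhou}, solving $S_{\log}$ is the same as maximizing $J(\gamma_P)$ subject to the normalization $\overline{\delta J}(f,\gamma_P)=1$, i.e. $\mathrm{tr}(Q^{-1}M)=2J(f)$; equivalently, minimize $\det Q$ on this constraint surface. The clean device, which yields existence and uniqueness at once, is the substitution $R=M^{1/2}Q^{-1}M^{1/2}$. Then $\mathrm{tr}(R)=\mathrm{tr}(Q^{-1}M)=2J(f)$ is fixed and $\det Q=\det M/\det R$, so the problem becomes: maximize $\det R$ over positive definite $R$ of prescribed trace. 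By the AM--GM inequality $\det R\le(\mathrm{tr}(R)/n)^n$, with equality precisely when all eigenvalues coincide, the unique maximizer is $R=\frac{2J(f)}{n}I$, hence $Q=\frac{n}{2J(f)}M$.

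To finish I would translate the optimal $Q$ into the stated equation. Since $\gamma_T=e^{-\varphi}$ with $\varphi(y)=\frac12\langle Qy,y\rangle$, and $\varphi$ is convex and lower semi-continuous so that $\varphi^{**}=\varphi$, we have $h_{\gamma_T^{\circ}}(y)=\varphi^{**}(y)=\frac12\langle Qy,y\rangle$. Inserting $Q=\frac{n}{2J(f)}M$ gives $h_{\gamma_T^{\circ}}(y)=\frac{n}{4J(f)}\langle My,y\rangle=\frac{n}{4J(f)}\int_{\mathbb{R}^n}|\langle x,y\rangle|^2\,d\mu_f(x)$, which is exactly (\ref{main theorem2-1}). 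Conversely, any $\gamma_T$ satisfying (\ref{main theorem2-1}) forces $Q=\frac{n}{2J(f)}M$ and is therefore the maximizer; uniqueness of the solution is inherited from the uniqueness in the AM--GM equality case.

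The step I expect to be the main obstacle is analytic rather than algebraic: one must know that for $f\in\mathcal{A}'$ the first variation formula $\delta J(f,\gamma_P)=\int h_{\gamma_P}\,d\mu_f$ is valid and that $M$ is finite and positive definite, so that $M^{1/2}$ exists, the substitution is legitimate, and the maximizer $Q=\frac{n}{2J(f)}M$ lies in the admissible class of positive definite symmetric matrices. These are precisely the structural facts that the class $\mathcal{A}'$ and Theorem 4.2 / Lemma 4.1 of \cite{fangzhou} are designed to supply; granting them, the matrix optimization together with the AM--GM equality condition delivers existence, uniqueness, and the characterization simultaneously.
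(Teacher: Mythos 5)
Your proof is correct, but it takes a genuinely different route from the paper's. The paper's own proof is a reduction-plus-citation argument: it uses Lemma \ref{equivalent of two problem} to pass from an $S_{\log}$ solution to an $\overline{S_{\log}}$ solution, computes the two scaling identities (\ref{step5-1}) and (\ref{step5-2}), and then reads off \eqref{main theorem2-1} from the characterization of $\overline{S_{\log}}$ solutions in Theorem 4.1 of \cite{fangzhou}, with uniqueness inherited from Theorem 4.2 there. You instead solve the optimization directly: parametrizing Gaussians by $Q=P^tP$, computing $J(\gamma_P)=c_n(\det Q)^{-1/2}$ and $\delta J(f,\gamma_P)=\tfrac12\mathrm{tr}(Q^{-1}M)$ with $M=\int_{\mathbb{R}^n} x\otimes x\,d\mu_f$, and reducing via the substitution $R=M^{1/2}Q^{-1}M^{1/2}$ to maximizing $\det R$ at fixed trace, which AM--GM solves uniquely, giving $Q=\frac{n}{2J(f)}M$, i.e.\ exactly \eqref{main theorem2-1}. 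Your route buys self-containedness: existence, uniqueness, and the characterization emerge in one stroke (in particular the compactness argument of Lemma \ref{existence} becomes unnecessary for this theorem), and it makes transparent that the optimal Gaussian's matrix is proportional to the second-moment matrix of $\mu_f$. The paper's route buys brevity and confines the analytic work to \cite{fangzhou}. Both arguments rest on the same external inputs, which you correctly flag: Lemma 4.1(1) of \cite{fangzhou} (so that $S_{\log}$ may be treated as maximizing $J$ under the active constraint $\overline{\delta J}(f,\cdot)=1$), the first variation formula $\delta J(f,\gamma_P)=\int h_{\gamma_P}\,d\mu_f$ (already used implicitly in the proof of Lemma \ref{existence}), and finiteness plus positive definiteness of $M$ for $f\in\mathcal{A}'$ --- without the latter, the substitution $R=M^{1/2}Q^{-1}M^{1/2}$ and the attainment of the AM--GM bound would both fail.
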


\begin{proof}
From Lemma \ref{equivalent of two problem}, we note that if $\gamma_T$  solves $S_{\log}$,
then $\left(\frac{c_n}{J(\gamma_{T})}\right)^{\frac{2}{n}}\cdot\gamma_{T}$ solves $\overline{S_{\log}}$.
Since
\begin{eqnarray}\label{step5-1}
\delta J\left(f,\left(\frac{c_n}{J(\gamma_{T})}\right)^{\frac{2}{n}}\cdot\gamma_T\right)
&=&\left(\frac{c_n}{J(\gamma_{T})}\right)^{\frac{2}{n}}\delta J(f,\gamma_{T}) \nonumber\\
&=&J(f)\left(\frac{c_n}{J(\gamma_{T})}\right)^{\frac{2}{n}},
\end{eqnarray}
and
\begin{eqnarray}\label{step5-2}
h\left(\left(\left(\frac{c_n}{J(\gamma_{T})}\right)^{\frac{2}{n}}\cdot\gamma_{T}\right)^{\circ},y\right)
&=&\left(\frac{c_n}{J(\gamma_{T})}\right)^{\frac{2}{n}}\cdot h_{\gamma_T^{\circ}}(y)\nonumber\\
&=&\left(\frac{c_n}{J(\gamma_{T})}\right)^{\frac{2}{n}}h_{\gamma_T^{\circ}}\left(\frac{y}{\left(\tfrac{c_n}{J(\gamma_{T})}\right)^{\frac{2}{n}}}\right),
\end{eqnarray}
the formula (\ref{main theorem2-1}) follows from  Theorem 4.1 in \cite{fangzhou}, (\ref{step5-1}) and  (\ref{step5-2}).
\end{proof}

 Let $f\in\mathcal{A}$. The new log-concave function $\Pi f$ of $f=e^{-\varphi}$ is defined by

 \begin{eqnarray}\label{definition of the projection function of log-concave functions0}
h(\Pi f, y)=h_{\Pi f}(y)=\frac{1}{2}\int_{\mathbb{R}^n}|\langle x, y\rangle| d\mu_f(x),
\end{eqnarray}
for   $y\in \mathbb{R}^n$.  In particular,  if  $f=e^{-\|x\|_K}$, then

\begin{eqnarray}
h_{\Pi f}(x)= \Gamma(n) h_{\Pi  K}(x),
\end{eqnarray}
for $x\in \mathbb{R}^n$ and $K\in \mathcal{K}_o^n$.

We have the following Petty projection inequality for log-concave
functions.

\begin{thm}\label{Petty projection inequality of log-concave functions}
 Let $f\in \mathcal{A}$ and $\Pi^{\circ} f$ denote the polar of $\Pi f$.  Then
\begin{eqnarray*}\label{Petty projection inequality of log-concave functions 1}
\int_{\mathbb{R}^n}\|\nabla f(x)\|dx\geq n\omega_n^{\frac{1}{n}}
\left[ \frac{\omega_{n-1}^n}{ \omega_{n}^n\Gamma(n+1)}J(\Pi^{\circ}f)\right]^{-\frac{1}{n}}\geq n^{}\omega_n^{1/n}\left( \int_{\mathbb{R}^n} |
f(x) |^{\frac n{n-1}} dx\right)^{\frac{n-1}n}.
\end{eqnarray*}
The first inequality holds with  equality sign if $f$ is  the characteristic function of  a ball and the second
inequality holds with  equality sign if $f$ is  the characteristic function of  an ellipsoid.
\end{thm}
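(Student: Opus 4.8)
The plan is to split the chain into three ingredients: a change-of-variables identity that turns the total-variation integral into an integral against $\mu_f$, an elementary Jensen (Urysohn-type) estimate for the first inequality, and Zhang's affine Sobolev inequality for the second. Throughout I write $L$ for the origin-symmetric convex body whose support function is $h_{\Pi f}$. Since $h_{\Pi f}$ is $1$-homogeneous and convex, its Legendre conjugate is the indicator of $L$, so by \eqref{dual function} one has $\Pi^{\circ}f=e^{-h_L}$ with $h_L=h_{\Pi f}$, and $L^{\circ}$ is the polar of $L$.

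First I would record the gradient identity. Writing $f=e^{-\varphi}$ gives $\nabla f=-f\,\nabla\varphi$ almost everywhere, hence $\|\nabla f(x)\|=f(x)\,\|\nabla\varphi(x)\|$; since $\mu_f=\nabla\varphi_{\sharp}(f\mathcal H^{n})$, the definition of push-forward yields
\[
\int_{\mathbb R^{n}}\|\nabla f(x)\|\,dx=\int_{\mathbb R^{n}}\|v\|\,d\mu_f(v).
\]
Next I would convert $J(\Pi^{\circ}f)$ into the volume of $L^{\circ}$. Passing to polar coordinates in $\int_{\mathbb R^{n}}e^{-h_L(y)}\,dy$ and using $1$-homogeneity of $h_L$ together with $\rho_{L^{\circ}}=1/h_L$ on $S^{n-1}$ gives
\[
J(\Pi^{\circ}f)=\int_{\mathbb R^{n}}e^{-h_L(y)}\,dy=\Gamma(n)\int_{S^{n-1}}h_L(u)^{-n}\,du=\Gamma(n+1)\,V(L^{\circ}).
\]
Substituting this identity turns the middle term into $\dfrac{n\,\omega_n^{1+1/n}}{\omega_{n-1}}\,V(L^{\circ})^{-1/n}$, which is the form I will actually estimate.

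For the first inequality I would integrate \eqref{definition of the projection function of log-concave functions0} over $S^{n-1}$ and use Fubini together with $\int_{S^{n-1}}|\langle u,v\rangle|\,du=2\omega_{n-1}\|v\|$ to obtain $\int_{S^{n-1}}h_L(u)\,du=\omega_{n-1}\int_{\mathbb R^{n}}\|v\|\,d\mu_f(v)$. Applying Jensen's inequality to the probability measure $du/(n\omega_n)$ and the convex function $t\mapsto t^{-n}$ gives $\bigl(\int_{S^{n-1}}h_L\,du\bigr)^{n}V(L^{\circ})\ge n^{n}\omega_n^{n+1}$, and rearranging reproduces exactly the middle bound. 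Equality here forces $h_L$ to be constant on $S^{n-1}$, i.e. $L$ a centered ball, which is the case $f=\mathbf 1_{B}$.

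The second inequality is the substantial one. Using $\nabla f=-f\,\nabla\varphi$ once more I would rewrite $h_L(y)=h_{\Pi f}(y)=\tfrac12\int_{\mathbb R^{n}}|\langle\nabla f(x),y\rangle|\,dx$, so that $L$ is precisely Zhang's projection body of the function $f$; the desired estimate $V(L^{\circ})^{1/n}\bigl(\int_{\mathbb R^{n}}|f|^{n/(n-1)}\bigr)^{(n-1)/n}\le \omega_n/\omega_{n-1}$ is then Zhang's affine Sobolev inequality, which I would invoke. This is the main obstacle: the first inequality is soft, but the second packages the full strength of the affine isoperimetric machinery (its proof runs through the Petty projection inequality applied to the level sets $\{f>t\}$, the decomposition $h_L=\int_0^{\infty}h_{\Pi\{f>t\}}\,dt$ coming from the coarea formula, and the superadditivity of $L\mapsto V(\Pi^{\circ}L)^{-1/n}$ under Minkowski combination). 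For the equality assertions, $f=\mathbf 1_{B}$ makes $\mu_f$ and hence $L$ rotationally symmetric so the Jensen step is sharp; and the extremizers of Zhang's inequality being characteristic functions of ellipsoids give equality in the second inequality for $f=\mathbf 1_{E}$.
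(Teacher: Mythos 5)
Your proposal is correct and takes essentially the same route as the paper: both proofs turn $J(\Pi^{\circ}f)$ into a spherical integral by polar coordinates, obtain the first inequality from Jensen's inequality on $S^{n-1}$ (together with $\int_{S^{n-1}}|\langle u,v\rangle|\,du=2\omega_{n-1}\|v\|$ and Fubini), and obtain the second by invoking Zhang's affine Sobolev inequality, with the same treatment of the equality cases. The only difference is presentational: you first rewrite $J(\Pi^{\circ}f)=\Gamma(n+1)V(L^{\circ})$ and work with the polar volume of Zhang's projection body $L$, while the paper carries the exponential integrals through directly; the underlying estimates are identical.
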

\begin{proof}

By  the definition of $\Pi^{\circ}f$,  integral of  polar coordinates with respect to the sphere Lebesgue measure, Jensen's  inequality and Fubini's Theorem,
we have
\begin{eqnarray}\label{Petty projection inequality of log-concave functions 2}
\int_{\mathbb{R}^n}\Pi^{\circ}f(x)dx&=&\int_{\mathbb{R}^n}\text{exp}\left[-\frac{1}{2}\int_{\mathbb{R}^n}|\langle\nabla \varphi, y\rangle| fdx\right]dy  \nonumber  \\
&=& \int_{0}^{\infty}r^{n-1} \int_{S^{n-1}}\text{exp}\left[-\frac{r}{2}\int_{\mathbb{R}^n}|\langle\nabla f(x), u\rangle| dx\right]dudr  \nonumber\\
& \geq&  n\omega_n \int_{0}^{\infty} r^{n-1}\text{exp}\left[- \frac{1}{ n\omega_n}\int_{S^{n-1}}\frac{r}{2}
          \int_{\mathbb{R}^n}|\langle\nabla f, u\rangle| dxdu  \right]dr \nonumber\\
& =&  n\omega_n \int_{0}^{\infty} r^{n-1}\text{exp}\left[ -\frac{\omega_{n-1}}{ n\omega_n}r\int_{\mathbb{R}^n}\|\nabla f\| dx  \right]dr \nonumber\\
& =& \left[ \frac{\omega_{n-1}}{ n\omega_n}\int_{\mathbb{R}^n}\|\nabla f\| dx  \right]^{-n} n\omega_n \int_{0}^{\infty} r^{n-1}e^{-r}dr   \nonumber \\
&=& n\omega_n \Gamma(n)\left( \frac{\omega_{n-1}}{ n\omega_n}  \right)^{-n}    \left[\int_{\mathbb{R}^n}\|\nabla f\| dx  \right]^{-n}.
\end{eqnarray}
Equality holds if and only if $
\int_{\mathbb{R}^n}|\langle\nabla f(x), u\rangle| dx$ is independent of $u$. Let $f$ be the characteristic function,  $\mathcal{X}_B$,
of the unit ball $B$. By the approximation argument in Zhang \cite{zhang}, we obtain
\begin{eqnarray*}\label{}
\int_{\mathbb{R}^n}|\langle\nabla \mathcal{X}_B(x), u\rangle| dx=\frac{2\omega_{n-1}}{n\omega_n}.
\end{eqnarray*}
Hence, the equality holds in (\ref{Petty projection inequality of log-concave functions 2}) when $f$ is the characteristic function of  a ball.

By  the definition of $\Pi^{\circ}f$,  integral of  polar coordinates with respect to the sphere Lebesgue measure,
the affine Sobolev inequality (which obtained by Zhang \cite[Theorem 1.1]{zhang}), we have
\begin{eqnarray}\label{Petty projection inequality of log-concave functions 2}
\int_{\mathbb{R}^n}\Pi^{\circ}f(x)dx&=&\int_{\mathbb{R}^n}\text{exp}\left[-\frac{1}{2}\int_{\mathbb{R}^n}|\langle\nabla \varphi, y\rangle| fdx\right]dy  \nonumber  \\
&=& \int_{0}^{\infty}r^{n-1} \int_{S^{n-1}}\text{exp}\left[-\frac{r}{2}\int_{\mathbb{R}^n}|\langle\nabla f(x), u\rangle| dx\right]dudr  \nonumber\\
&=& 2^{n} \int_{0}^{\infty}r^{n-1} e^{-r}dr \int_{S^{n-1}}\left[\int_{\mathbb{R}^n}|\langle\nabla f(x), u\rangle| dx\right]^{-n}du \nonumber\\
&=& 2^{n} \Gamma(n) \int_{S^{n-1}}\left[\int_{\mathbb{R}^n}|\langle\nabla f(x), u\rangle| dx\right]^{-n}du \nonumber\\
& \leq&   \Gamma(n+1) \left(\frac{\omega_n}{\omega_{n-1}}\right)^{n}\left( \int_{\mathbb{R}^n} |
f(x) |^{\frac n{n-1}} dx\right)^{1-n}.
\end{eqnarray}
Equality condition follows from the affine Sobolev inequality.
\end{proof}

As  mentioned in \cite[Corollary 3.1]{SMV} that the first inequality in Theorem \ref{Petty projection inequality of log-concave functions}
includes a geometric inequality,  which  involves the surface area  and the volume of the polar body of Petty  projection body.
The second  inequality in Theorem \ref{Petty projection inequality of log-concave functions} implies the Petty projection  inequality.

\section{The functional LYZ ellipsoid of log-concave functions}

The Minkowski functional of $K$  is defined as
\begin{eqnarray*}\label{}
\|x\|_K=\max\{t\geq 0:  x\in tK\}, \quad x\in \mathbb{R}^n.
\end{eqnarray*}
For $K\in \mathcal{K}_o^n$,
\begin{eqnarray*}\label{}
\|x\|_K=h_{K^{\circ}}(x),
\end{eqnarray*}
where %$x\in\mathbb{R}^n$ and
$K^{\circ}=\left\{x\in\mathbb{R}^n: \langle x, y \rangle\leq 1 \  \text{for all} \ y\in K  \right\}$ %denotes
is the polar body of $K$.

\begin{lemma}\label{include geometric case}
Let $K\in \mathcal{K}_o^n$. If $f(x)=e^{-\frac{\|x\|_K^2}{2}}$, then
\begin{eqnarray*}\label{}
\Gamma_{-2}f(x)=e^{-\frac{\|x\|_{\Gamma_{-2}K}^2}{2}}.
\end{eqnarray*}
\end{lemma}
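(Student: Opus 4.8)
The plan is to reduce the defining integral (\ref{definition}) for $f=e^{-\varphi}$ with $\varphi(x)=\tfrac12\|x\|_K^2=\tfrac12 h_{K^\circ}(x)^2$ to a surface integral against the $L_2$ surface area measure $S_2(K,\cdot)$, and then to recognize the right-hand side of (\ref{LYZ's definition}). First I would rewrite $\mu_f=(\nabla\varphi)_\sharp(f\mathcal H^n)$ by pushing forward, so that for $x\in\mathbb R^n$
\[
\int_{\mathbb R^n}|\langle x,y\rangle|^2 h_f(y)^{-1}\,d\mu_f(y)=\int_{\mathbb R^n}|\langle x,\nabla\varphi(z)\rangle|^2\,h_f(\nabla\varphi(z))^{-1}f(z)\,dz.
\]
Since $\|\cdot\|_K$ is positively homogeneous of degree one, Euler's identity gives $\langle z,\nabla\|z\|_K\rangle=\|z\|_K$, so that $\nabla\varphi(z)=\|z\|_K\,\nabla\|z\|_K$ and $\langle z,\nabla\varphi(z)\rangle=\|z\|_K^2=2\varphi(z)$. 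Combined with the Fenchel equality $h_f(\nabla\varphi(z))=\varphi^*(\nabla\varphi(z))=\langle z,\nabla\varphi(z)\rangle-\varphi(z)$ recorded in the preliminaries, this yields $h_f(\nabla\varphi(z))=\tfrac12\|z\|_K^2$. Substituting and using that $\nabla\|z\|_K$ is homogeneous of degree zero, the factors of $\|z\|_K^2$ cancel and the integrand collapses to $2\,|\langle x,\nabla\|z\|_K\rangle|^2\,e^{-\|z\|_K^2/2}$.

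Next I would evaluate both this integral and $J(f)$ in generalized polar coordinates $z=r\omega$, writing $N(\omega):=\nabla\|z\|_K$ (constant along each ray). The radial Gaussian integral $\int_0^\infty r^{n-1}e^{-r^2\|\omega\|_K^2/2}\,dr=2^{n/2-1}\Gamma(n/2)\,\|\omega\|_K^{-n}$ produces
\[
\int_{\mathbb R^n}|\langle x,y\rangle|^2 h_f(y)^{-1}\,d\mu_f(y)=2^{n/2}\Gamma(n/2)\int_{S^{n-1}}\frac{|\langle x,N(\omega)\rangle|^2}{\|\omega\|_K^n}\,d\omega,
\]
while the same computation with integrand $e^{-\|z\|_K^2/2}$ gives $J(f)=nV(K)\,2^{n/2-1}\Gamma(n/2)$ (which reduces to $c_n$ when $K$ is the ball, consistent with $\Gamma_{-2}\gamma=\gamma$). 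Feeding these into the normalizing constant $\tfrac{n}{4J(f)}$ of (\ref{definition}) collapses it to $\tfrac{1}{2V(K)}$.

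The main step, and the one I expect to be the principal obstacle, is to identify $\int_{S^{n-1}}\|\omega\|_K^{-n}|\langle x,N(\omega)\rangle|^2\,d\omega$ with $\int_{S^{n-1}}|\langle x,v\rangle|^2\,dS_2(K,v)$. Here I would use the radial map $\omega\mapsto \xi=\omega/\|\omega\|_K$ from $S^{n-1}$ onto $\partial K$, under which the cone-volume identity furnishes the Jacobian relation $\|\omega\|_K^{-n}\,d\omega=\rho_K(\omega)^n\,d\omega=h_K(\nu_K(\xi))\,d\mathcal H^{n-1}(\xi)$, where $\nu_K$ is the Gauss map. On $\partial K$ one has $\nabla\|z\|_K=\nu_K/h_K(\nu_K)$, so $N(\omega)=\nu_K(\xi)/h_K(\nu_K(\xi))$ and hence $|\langle x,N(\omega)\rangle|^2\,h_K(\nu_K(\xi))=|\langle x,\nu_K(\xi)\rangle|^2\,h_K(\nu_K(\xi))^{-1}$. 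The Gauss-map change of variables $\int_{\partial K}g(\nu_K)\,d\mathcal H^{n-1}=\int_{S^{n-1}}g\,dS(K,\cdot)$ together with $dS_2(K,\cdot)=h_K^{-1}\,dS(K,\cdot)$ then delivers exactly $\int_{S^{n-1}}|\langle x,v\rangle|^2\,dS_2(K,v)$. The delicate point is that $K$ need not be smooth, so $\nu_K$ and $\nabla\|z\|_K$ exist only $\mathcal H^{n-1}$-a.e.; I would either invoke the general definition of $S(K,\cdot)$ through the a.e.\ defined Gauss map, or approximate $K$ by smooth bodies and pass to the limit.

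Combining the three displays gives $-\log\Gamma_{-2}f(x)=\tfrac{1}{2V(K)}\int_{S^{n-1}}|\langle x,v\rangle|^2\,dS_2(K,v)$. By the defining relation (\ref{LYZ's definition}) of the LYZ ellipsoid—the quadratic form $\tfrac{1}{V(K)}\int_{S^{n-1}}|\langle x,v\rangle|^2\,dS_2(K,v)$ being, by homogeneity of degree two, precisely $\|x\|_{\Gamma_{-2}K}^2$—this reads $-\log\Gamma_{-2}f(x)=\tfrac12\|x\|_{\Gamma_{-2}K}^2$, whence $\Gamma_{-2}f(x)=e^{-\|x\|_{\Gamma_{-2}K}^2/2}$, as claimed.
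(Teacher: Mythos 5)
Your proof is correct and follows essentially the same route as the paper's: push $\mu_f$ forward to an integral over $\mathbb{R}^n$, use Euler's identity and the evaluation $h_f(\nabla\varphi)=\tfrac12\|z\|_K^2$ to cancel the homogeneous factors, integrate out the radial Gaussian, and identify the resulting boundary integral $\int_{\partial K}|\langle x,\nu_K\rangle|^2 h_K(\nu_K)^{-1}\,d\mathcal{H}^{n-1}$ with $V(K)\|x\|_{\Gamma_{-2}K}^2$. The only cosmetic differences are that you use standard spherical coordinates followed by the radial-map (cone-volume) Jacobian onto $\partial K$, where the paper integrates directly in cone-measure polar coordinates $y=rz$, $z\in\partial K$, and that you compute $h_f(\nabla\varphi)$ via the Fenchel--Young equality rather than the explicit conjugate $\varphi^*=\tfrac12\|\cdot\|_{K^\circ}^2$.
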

\begin{proof}
Let $\bar{V}_K$ denote the normalized cone measure of $K$, which is given by
$$
d\bar{V}_{K}(z)=\frac{\langle z,\nu_{K}(z)\rangle}{nV(K)}d\mathcal{H}^{n-1}(z)\quad \text{for}\quad z\in \partial K.
$$
For $y\in \mathbb{R}^n$,  we write $y=rz$, with $z\in \partial K$, $dy=nV(K)r^{n-1}drd\bar{V}_{K}(z)$.
Since the map $y\mapsto \nabla \|y\|_{K}$ is 0-homogeneous and the fact that $\|z\|_K=1$ when $z\in\partial K$, therefore
\begin{eqnarray*}\label{}
&&\int_{\mathbb{R}^n}|\langle x, y\rangle|^2h_f(y)^{-1}d\mu_f(y)\\
&&\quad \quad =\int_{\mathbb{R}^n}|\langle x, \nabla(\|y\|_K^2\|/2)\rangle|^2\left(\frac{\|\nabla(\|y\|_K^2\|/2)\|_{K^{\circ}}^2}{2}\right)^{-1}
e^{-\frac{\|y\|_K^2}{2}}dy\\
&&\quad \quad =\int_{\mathbb{R}^n}|\langle x, \|y\|_K\nabla\|y\|_K\rangle|^2\left(\frac{\|\|y\|_K\nabla\|y\|_K\|_{K^{\circ}}^2}{2}\right)^{-1}
e^{-\frac{\|y\|_K^2}{2}}dy\\
&&\quad \quad =nV(K)\int_0^{\infty}\int_{\partial K}r^{n-1}|\langle x, \nabla\|z\|_K\rangle|^2\left(\frac{\|\nabla\|z\|_K\|_{K^{\circ}}^2}{2}\right)^{-1}
e^{-\frac{r^2}{2}}d\bar{V}_K(z)dr\\
&&\quad \quad =2^{\frac{n}{2}}\Gamma(\tfrac{n}{2})\int_{\partial K}|\langle x, \nu_K(z)\rangle|^2(\langle z, \nu_K(z)\rangle)^{-1}
d\mathcal{H}^{n-1}(z)\\
&&\quad \quad =2^{\frac{n}{2}}\Gamma(\tfrac{n}{2}) V(K)h_{\Gamma_{-2}^{\circ}K}^{2}(x).
\end{eqnarray*}
A direct calculation  shows

\begin{eqnarray}
J(e^{-\frac{\|x\|_K^2}{2}})=2^{\tfrac{n}{2}}\Gamma(\tfrac{n}{2}+1)V(K).
\end{eqnarray}
Hence
\begin{eqnarray*}
 -\log \Gamma_{-2}^{}f(x)=\frac{1}{2}\|x\|_{\Gamma_{-2}^{}K}^{2}.
\end{eqnarray*}
\end{proof}

Analogously,  $\Gamma_{-2}f$ is also an intertwining operator with the linear group ${\rm GL}(n)$.

\begin{lemma}
Let $f$ be an integrable log-concave function in $\mathbb{R}^n$. Then
\begin{eqnarray*}\label{}
\Gamma_{-2}(f\circ T)=(\Gamma_{-2}f)\circ T,
\end{eqnarray*}
for $T\in {\rm GL}(n)$.
\end{lemma}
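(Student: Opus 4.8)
The plan is to substitute $g = f\circ T$ into the defining formula (\ref{definition}) and to track how each of its three ingredients — the total mass $J$, the support function $h$, and the surface area measure $\mu$ — transforms under $T$, and then to observe that all Jacobian factors cancel. Throughout, write $f = e^{-\varphi}$, so that $g = f\circ T = e^{-\psi}$ with $\psi = \varphi\circ T$, and recall that $\mu_f = (\nabla\varphi)_\sharp(f\mathcal{H}^n)$, i.e. $\int G\, d\mu_f = \int_{\mathbb{R}^n} G(\nabla\varphi(z))f(z)\,dz$ for any test function $G$.

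First I would record the three transformation rules. (i) By the linear change of variables $z = Tx$,
\[
J(g) = \int_{\mathbb{R}^n} f(Tx)\,dx = |\det T|^{-1}J(f).
\]
(ii) For the support function, using (\ref{Fenchel conjugate}) and the same substitution,
\[
h_g(x) = \psi^*(x) = \sup_{y}\{\langle x,y\rangle - \varphi(Ty)\} = \sup_{z}\{\langle T^{-t}x, z\rangle - \varphi(z)\} = h_f(T^{-t}x),
\]
where $T^{-t} = (T^t)^{-1}$. (iii) For the surface area measure, the chain rule gives $\nabla(-\log g)(x) = \nabla\psi(x) = T^t(\nabla\varphi)(Tx)$, so that for any test function $F$, unwinding the pushforward $\mu_g = (\nabla\psi)_\sharp(g\mathcal{H}^n)$ and again substituting $z = Tx$ yields
\[
\int_{\mathbb{R}^n} F(y)\,d\mu_g(y) = \int_{\mathbb{R}^n} F\big(T^t\nabla\varphi(Tx)\big)f(Tx)\,dx = |\det T|^{-1}\int_{\mathbb{R}^n} F(T^t w)\,d\mu_f(w).
\]

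With these in hand the remainder is bookkeeping. I would apply rule (iii) with $F(y) = |\langle x,y\rangle|^2 h_g(y)^{-1}$, using the adjoint identity $\langle x, T^t w\rangle = \langle Tx, w\rangle$ together with rule (ii) in the form $h_g(T^t w) = h_f\big(T^{-t}(T^t w)\big) = h_f(w)$. This collapses the transformed integral to $|\det T|^{-1}\int_{\mathbb{R}^n} |\langle Tx, w\rangle|^2 h_f(w)^{-1}\,d\mu_f(w)$. Dividing by $J(g) = |\det T|^{-1}J(f)$ from rule (i), the two factors of $|\det T|^{-1}$ cancel, leaving
\[
-\log\Gamma_{-2}g(x) = \frac{n}{4J(f)}\int_{\mathbb{R}^n}|\langle Tx, w\rangle|^2 h_f(w)^{-1}\,d\mu_f(w) = -\log\Gamma_{-2}f(Tx),
\]
which is exactly the claimed identity $\Gamma_{-2}(f\circ T) = (\Gamma_{-2}f)\circ T$.

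I expect the only genuine obstacle to be the careful derivation of transformation rule (iii): one must apply the chain rule to $-\log g = \varphi\circ T$ to produce the factor $T^t$ inside the gradient, and then correctly account for the Jacobian $|\det T|^{-1}$ arising from the change of variables in the pushforward. The appearance of $T^t$ in the measure and of $T^{-t}$ in the support function is precisely what makes $h_g(T^t w)$ simplify back to $h_f(w)$, so rules (ii) and (iii) must be matched consistently. Everything else reduces to the routine observation that the Jacobian factors coming from $J$ and from $\mu$ are identical and therefore cancel.
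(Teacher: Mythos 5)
Your proposal is correct and follows essentially the same route as the paper: the identity $(\varphi\circ T)^{*}=\varphi^{*}\circ T^{-t}$, the chain rule $\nabla(\varphi\circ T)=T^{t}(\nabla\varphi)\circ T$, and the change of variables $z=Ty$ in the pushforward measure, with the Jacobian factors cancelling against $J(f\circ T)=|\det T|^{-1}J(f)$. Your more explicit bookkeeping in fact handles the Jacobian correctly, whereas the paper's displayed prefactor $\tfrac{n}{4|\det T|J(f)}$ is a misprint for $\tfrac{n|\det T|}{4J(f)}$, which is what the computation requires.
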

\begin{proof}
Let $f=e^{-\varphi}$ and $T\in {\rm GL}(n)$. Since $\nabla_{y}(\varphi\circ T)=T^t\nabla_{Ty}\varphi$, so that
\begin{eqnarray*}\label{}
(\varphi\circ T)^{*}(y)&=&\sup_{x\in\mathbb{R}^n}\left\{\langle x,y\rangle-\varphi(Tx)\right\}\\
&=&\sup_{x\in\mathbb{R}^n}\left\{\langle x,T^{-t}y\rangle-\varphi(x)\right\}\\
&=&\varphi^{*}(T^{-t}y).
\end{eqnarray*}
Then,  by a simple calculation we have
\begin{eqnarray*}\label{}
-\log \Gamma_{-2}^{}f(x)&=&\frac{n}{4|\det T|J(f)}\int_{\mathbb{R}^n}|\langle x, \nabla(\varphi\circ T)(y)\rangle|^2\varphi^{*}(T^{-t}\nabla(\varphi\circ T)(y))^{-1}f(Ty)dy\\
&=&\frac{n}{4|\det T|J(f)}\int_{\mathbb{R}^n}|\langle x, T^t\nabla_{Ty}\varphi(Ty)\rangle|^2\varphi^{*}(\nabla_{Ty}\varphi(Ty))^{-1}f(Ty)dy\\
&=&\frac{n}{4J(f)}\int_{\mathbb{R}^n}|\langle Tx, \nabla\varphi(y)\rangle|^2\varphi^{*}(\nabla\varphi(y))^{-1}f(y)dy\\
&=&-\log \Gamma_{-2}^{}f(Tx).
\end{eqnarray*}
\end{proof}

%Combing the above two results, we conclude that:

The following result is the direct consequence.

\begin{cor}
For  $T\in {\rm GL}(n)$, the LYZ ellipsoid of Gauss function is
%itself, i.e.,
\begin{eqnarray}\label{}
\Gamma_{-2}(\gamma\circ T)(x)=e^{-\frac{\|Tx\|^2}{2}}.
\end{eqnarray}
%for $T\in {\rm GL}(n)$.
\end{cor}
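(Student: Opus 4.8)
The plan is to read the identity off directly from the intertwining property just established together with the normalization built into Definition \ref{new definition}. Recall that the constant $\frac{n}{4J(f)}$ appearing in (\ref{definition}) was chosen precisely so that the standard Gaussian is a fixed point of the operator, i.e. $\Gamma_{-2}\gamma=\gamma$. Granting this, no further direct computation with the defining integral is needed.

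First I would apply the preceding lemma with $f=\gamma$. Since $\gamma\in\mathcal{A}$ is integrable and log-concave and $T\in{\rm GL}(n)$, the intertwining relation yields
\begin{eqnarray*}
\Gamma_{-2}(\gamma\circ T)=(\Gamma_{-2}\gamma)\circ T.
\end{eqnarray*}
Next I would substitute the fixed-point normalization $\Gamma_{-2}\gamma=\gamma$ and evaluate at $x\in\mathbb{R}^n$, giving
\begin{eqnarray*}
\Gamma_{-2}(\gamma\circ T)(x)=(\Gamma_{-2}\gamma)(Tx)=\gamma(Tx)=e^{-\frac{\|Tx\|^2}{2}},
\end{eqnarray*}
which is exactly the asserted formula.

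There is essentially no obstacle here, since the entire content is carried by the two inputs, the intertwining lemma and the fixed-point normalization, both already available. The one point worth a remark is that the normalization $\Gamma_{-2}\gamma=\gamma$ must be genuinely in hand; if one wishes to be self-contained, it is verified by inserting $\varphi(y)=\frac{\|y\|^2}{2}$ into (\ref{definition}), so that $\nabla(-\log\gamma)={\rm id}$, $h_\gamma(y)=\frac{\|y\|^2}{2}$, and $\mu_\gamma=\gamma\,\mathcal{H}^n$, and then passing to polar coordinates: the radial integral produces a $\Gamma(\tfrac{n}{2})$-factor, the spherical average of $|\langle x,u\rangle|^2$ supplies the $\|x\|^2$-dependence, and the prescribed constant collapses the whole expression to $\frac{\|x\|^2}{2}$. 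With that granted, the corollary follows in a single line, as indicated by the phrase ``direct consequence'' preceding the statement.
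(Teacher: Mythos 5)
Your proposal is correct and matches exactly what the paper intends: the corollary is stated as a ``direct consequence'' of the intertwining lemma $\Gamma_{-2}(f\circ T)=(\Gamma_{-2}f)\circ T$, combined with the normalization $\Gamma_{-2}\gamma=\gamma$ asserted when the constant $\frac{n}{4J(f)}$ was chosen in Definition \ref{new definition}. Your verification of the fixed-point property via polar coordinates (which indeed yields $\frac{n}{4}\cdot\frac{\Gamma(n/2)\,\omega_n}{\pi^{n/2}}\|x\|^2=\frac{\|x\|^2}{2}$) is also sound, and fills in a detail the paper leaves implicit.
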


The function

\begin{eqnarray}\label{new definition2}
x\mapsto \frac{n}{4 J(f)}\int_{\mathbb{R}^n}|\langle x,
y\rangle|^2h_f(y)^{-1}d\mu_f(y)
\end{eqnarray}
is a continuous convex function. The LYZ ellipsoid of log-concave
functions (\ref{definition}) can be rewritten as
\begin{eqnarray}\label{}
h_{\Gamma_{-2}^{\circ}f}(x)= \frac{n}{4
J(f)}\int_{\mathbb{R}^n}|\langle x, y\rangle|^2h_f(y)^{-1}d\mu_f(y).
\end{eqnarray}

%%%%%%%%%%%%%%%%%%%%%%%%%%%%%%%%%%%%%%%%%%%%%%%%%%%%%%%%%%%%%%%%%%%%%%%%%%%%%%%%%%%%%%%%%%%%%%%%%%


\begin{thebibliography}{99}

\bibitem{Alonso} D. Alonso-Guti\'errez, B.   Merino, C. H.  Jim\'enez and R. Villa, John's ellipsoid and the integral raito of a log-concave function, J. Geom. Anal., 28 (2018), 1182-1201.

\bibitem {ball} K. Ball, Volume ratios and a reverse isoperimetric inequality, J. London Math. Soc., 44 (1991), no.2, 351-359.
\bibitem{Colesanti} A. Colesanti and I.  Fragal$\grave{\text{a}}$, The first variation of the total mass of log-concave functions and related inequalities, Adv. Math., 244 (2013), 708-749.
\bibitem{ColesantiLudwigMussnig1}A. Colesanti, M.  Ludwig and F. Mussnig, Valuations on convex functions, Int. Math. Res. Not., rnx189, https://doi.org/10.1093/imrn/rnx189.

  \bibitem{Cordero-ErausquinKlartag} D. Cordero-Erausquin and B. Klartag, Moment measure, J. Funct. Anal., 268 (2015), 3834-3866.

\bibitem{fangzhou} N. Fang and J. Zhou, LYZ ellipsoid and Petty projection body for log-concave functions, Adv. Math., 340 (2018), 914-959.
\bibitem{LSW} B. Li, C. Schuett and E. M. Werner,   The Loewner function of a log-concave function,  arXiv:1904.01211v2 [math.FA].

\bibitem{LYZ2000} E. Lutwak, D. Yang and G. Zhang, A new ellipsoid associated with convex bodies, Duke Math. J., 104 (2000), 375-390.

%\bibitem{LYZ2002} E. Lutwak, D. Yang and G. Zhang, The Cramer-Rao inequality for star bodies, Duke Math. J., 112 (2002), no.1, 338-360.

%\bibitem{LYZ3} E. Lutwak, D. Yang and G. Zhang, Sharp affine $L_p$ Sobolev
%inequalities, J. Differ. Geom.,  62 (2002), 17-38.

\bibitem{LYZ2005} E. Lutwak, D. Yang and G. Zhang, $L_p$ John ellipsoids, Proc. London Math. Soc.,  90 (2005), no.3, 497-520.


\bibitem{pisier} G. Pisier, The volume of convex bodies in Banach space geometry, Cambridge University Press, (1989).

\bibitem{R} R. T. Rockafellar, Convex Analysis, Princeton University Press (Princeton, 1970).

\bibitem{RW} R. T. Rockafellar and R. J.-B. Wets, Variational analysis, Grundlehren der Mathematischen Wissenschaften, vol. 317, Springer-Verlag, Berlin, 1998.

\bibitem{Rotem1}L. Rotem, On the mean width of log-concave functions, in Geometric aspects of functional analysis, volume 2050 of lecture Notes in Math., Springer, Berlin, (2012).

\bibitem{SMV} L.A. da Silva, B.G. Merino and R. Villa, Some remarks on Petty projection of log-concave functions, arXiv:1906.08183v2 [math.FA] 24 Jun 2019.

\bibitem{zhang} G. Zhang, The affine Sobolev inequality, J. Differ.  Geom.,  53 (1999),
183-202.
\bibitem{zou} D. Zou and G. Xiong, Orlicz-John ellipsoids, Adv.   Math.,  265 (2014), 132-168.


\end{thebibliography}
\end{document}